\numberwithin{equation}{section}
\theoremstyle{plain}
\newtheorem{maintheorem}{Theorem}
\newtheorem{theorem}{Theorem}[section]
\newtheorem{cor}[theorem]{Corollary}
\newtheorem{definition}[theorem]{Definition}
\theoremstyle{remark}
\begin{document}
\title[Some sufficient conditions for transitivity of Anosov diffeomorphisms]
{Some sufficient conditions for transitivity of Anosov diffeomorphisms}
\author[F. Micena]{Fernando Micena}
\address{Instituto de Matem\'{a}tica e Computa\c{c}\~{a}o,
  IMC-UNIFEI, Itajub\'{a}-MG, Brazil.}
\email{fpmicena82@unifei.edu.br}


\thanks{I thank Ali Tahzibi, Boris Hasselblatt,  Luis Fernando Mello, Paulo Varandas and Rafael de la Llave  for the valuable conversations, comments and suggestions during the preparation of this paper. I thank the anonymous referee for your comments and suggestions. }
\baselineskip=18pt              


\begin{abstract}
Given a $C^2-$ Anosov diffemorphism $f: M \rightarrow M,$ we prove that the jacobian condition $Jf^n(p) = 1,$ for every point $p$ such that $f^n(p) = p,$ implies transitivity. As application in the celebrated theory of Sinai-Ruelle-Bowen, this result allows us to state a classical theorem of Livsic-Sinai without directly assuming transitivity as a general hypothesis. A special consequence of our result is that every $C^2-$Anosov diffeomorphism, for which every point is regular, is indeed transitive.
\end{abstract}

\subjclass[2010]{}
\keywords{}

\maketitle


\section{Introduction}\label{sec:intro}

Let $(M,g)$ be a $C^{\infty}$ compact, connected and boundaryless Riemannian mani-\\fold and $f:M \rightarrow M$  a $C^1-$diffeomorphism. We say that $f$ is an Anosov diffeomorphims if there are numbers $0 < \beta < 1 < \eta, C > 0$ and a $Df-$invariant continuous splitting $T_xM = E^u_f(x) \oplus E^s_f(x),$ such that for any $n \geq 0$ and for all $x \in M$
$$ ||Df^n(x) \cdot v || \geq \frac{1}{C} \eta^n ||v||, \forall v \in E^u_f(x), \quad  ||Df^n(x) \cdot v || \leq C \beta^n ||v||, \forall v \in E^s_f(x).$$

It is known that the bundles $E^s_f, E^u_f$ are  respectively integrable to invariant  stable and unstable foliations denoted $\mathcal{F}^s_f$ and $\mathcal{F}^u_f.$  Given a point $x \in M,$ the stable and unstable leaves that contain $x$ are respectively characterized by
$$W^s_f(x) = \{ y \in M| \; \displaystyle\lim_{n \rightarrow +\infty}d(f^n(x) , f^n(y)) = 0  \},$$
$$W^u_f(x) = \{ y \in M| \; \displaystyle\lim_{n \rightarrow +\infty}d(f^{-n}(x) , f^{-n}(y)) = 0  \}.$$
If $f$ is $C^r, r\geq 1,$ then $W^{\ast}_f(x), \ast \in \{s,u\}$ are embedded $C^r$ submanifolds of $M.$

From now on we denote by $m$ the probability Lebesgue measure on $M$ induced by $g,$ and by $Per(f)$ the set of periodic points of a diffeomorphism $f $ and we denote $Jf(x) = |\det (Df(x): T_xM  \rightarrow T_{f(x)}M)|. $

Anosov diffeomorphims play an important role in the theory of dynamical systems and this class of diffeomorphisms satisfies many rich dynamical pro-\\perties.
An important and  fundamental step toward the classification of Anosov diffeomorphisms would consist in showing that every Anosov diffeomorphism is indeed transitive.

\begin{definition}
Let $(X,d)$ be a compact metric space and $f: X \rightarrow X$ a continuous function. We say that $f$ is transitive if for any nonempty open sets $U$ and $V$ there exists an integer $N \geq 0$ such that $f^{-N}(V) \cap U \neq \emptyset,$ or equivalently, there exists a point $x \in X$ with dense orbit.
\end{definition}

From Theorem C of \cite{Mn}, it is well known that every Anosov diffeomorphism on a infranilmanifold  is transitive.

The present work concerns to provide some sufficient conditions for transitivity of Anosov diffemorphisms. Our main result is the following theorem.

\begin{maintheorem}\label{t3}
Let $f: M \rightarrow M$ be a $C^{2}-$Anosov diffeomorphism. If $Jf^n(p) = 1,$ for any $ p \in Per(f),$ such that $f^n(p) = p,$ then $f$ is
transitive and leaves an invariant $C^1$ volume form.
\end{maintheorem}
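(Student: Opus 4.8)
The plan is to combine the spectral decomposition of the non‑wandering set with Livsic's theorem on basic sets, with the Pesin--Ruelle entropy theory, and with a Hopf‑type argument, and then to upgrade the regularity of the resulting invariant density via the smooth (Livsic--Sinai) cohomology theory. First I would invoke the spectral decomposition: an Anosov diffeomorphism is Axiom A, so $\Omega(f)=\Omega_1\sqcup\cdots\sqcup\Omega_k$ is a finite disjoint union of transitive, locally maximal hyperbolic basic sets, and at least one of them, say $\Lambda$, is an attractor. On the hyperbolic attractor $\Lambda$ there is a Sinai--Ruelle--Bowen measure $\mu$ (which I take ergodic); it has absolutely continuous conditional measures on unstable leaves and satisfies the Pesin entropy formula $h_\mu(f)=\int\log Jf^u\,d\mu$, where $Jf^u$, $Jf^s$ denote the Jacobians of $Df$ along $E^u_f$, $E^s_f$, so that $\log Jf=\log Jf^u+\log Jf^s$.

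Next I would feed in the hypothesis. The function $\log Jf$ is $C^1$ (as $f$ is $C^2$), hence H\"older, and by assumption $\sum_{j=0}^{n-1}\log Jf(f^j p)=\log Jf^n(p)=0$ for every $p\in Per(f)$ with $f^n(p)=p$. Since $f|_\Lambda$ is transitive, Livsic's theorem gives that $\log Jf|_\Lambda$ is a H\"older coboundary, so $\int_\Lambda\log Jf\,d\nu=0$ for every $f$-invariant Borel probability $\nu$ supported on $\Lambda$; in particular $\int\log Jf^u\,d\mu=-\int\log Jf^s\,d\mu$. Combining this with the entropy formula and with Ruelle's inequality applied to $f^{-1}$ (whose unstable bundle is $E^s_f$),
\[
h_\mu(f)=h_\mu(f^{-1})\le -\int\log Jf^s\,d\mu=\int\log Jf^u\,d\mu=h_\mu(f),
\]
so equality holds in Ruelle's inequality for $f^{-1}$; hence $\mu$ is also an SRB measure for $f^{-1}$, i.e.\ it has absolutely continuous conditionals on the stable leaves of $f$ as well. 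Having absolutely continuous conditionals on both foliations, together with the absolute continuity of the stable and unstable foliations of a $C^2$ Anosov diffeomorphism, forces $\mu\ll m$, with density $\rho$ whose positivity set $A=\{\rho>0\}$ is $f$-invariant and $\mu$-conull. Because the SRB conditional densities are strictly positive along leaves, $A$ is, modulo $m$-null sets, saturated by stable and by unstable leaves, hence essentially open, and so is its complement; since $M$ is connected this forces $m(M\setminus A)=0$, so $\mu$ is equivalent to $m$ and $\operatorname{supp}\mu=M$. Then $\Lambda=\operatorname{supp}\mu=M$, hence $\Omega(f)=M$, and as the $\Omega_i$ are pairwise disjoint closed (hence also open) sets covering the connected manifold $M$, the decomposition is trivial and $f$ is transitive.

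For the invariant $C^1$ volume form, I would use the density $\rho=d\mu/dm$, which is positive $m$-a.e.\ and, by invariance of $\mu$, satisfies $\rho=(\rho\circ f)\,Jf$, i.e.\ $\log Jf=\log\rho-\log\rho\circ f$. Thus $\log\rho$ is a measurable solution of the Livsic equation over the transitive Anosov system $f$ with $C^1$ datum $\log Jf$. By the regularity part of Livsic's theorem (measurable solutions agree a.e.\ with continuous ones) and its smooth refinement in the spirit of Livsic--Sinai and de la Llave--Marco--Mori\'on (using that $\log Jf\in C^1$, together with a Journ\'e‑type gluing along the two foliations), $\rho$ has a strictly positive $C^1$ representative; then $\rho$ times the Riemannian volume is an $f$-invariant $C^1$ volume form.

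\textbf{Main obstacle.} The delicate point is the middle step: converting the vanishing of the periodic Jacobians into a \emph{globally} absolutely continuous invariant measure, i.e.\ passing from ``$\int\log Jf\,d\mu=0$ on an attractor'' to ``$\mu\sim m$ on all of $M$''. This forces one to use the characterization of SRB measures through equality in Ruelle's inequality for both $f$ and $f^{-1}$, the absolute continuity of the invariant foliations, and the Hopf/connectedness argument ruling out a proper attractor. The subsequent upgrade of $\rho$ from merely integrable to $C^1$ is a second, more technical issue, handled by the smooth Livsic theory.
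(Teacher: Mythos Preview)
Your proposal is correct and follows essentially the same strategy as the paper: obtain an SRB measure on a basic set, use the periodic Jacobian hypothesis to force equality in the Pesin/Ruelle formula for both $f$ and $f^{-1}$, invoke Ledrappier's characterization to deduce that this measure is absolutely continuous with respect to $m$, and then use connectedness of $M$ to conclude transitivity; finally, produce the $C^{1}$ invariant density via Livsic theory. The paper's execution differs only in minor streamlining: it gets $\Lambda^{u}_{f}(x)+\Lambda^{s}_{f}(x)=0$ for $\mu$-a.e.\ $x$ directly from the Anosov Closing Lemma (rather than Livsic on the basic set), it passes from $m(\Omega_{s})>0$ to $\Omega_{s}=M$ via the attractor/repeller dichotomy (Corollary~7.4.7 of Fisher--Hasselblatt) instead of a Hopf-type saturation argument on the density, and, once transitivity is in hand, it applies the $C^{1}$ Livsic theorem directly to $\log Jf$ rather than first exhibiting a measurable coboundary and then upgrading its regularity.
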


The most important part of Theorem \ref{t3} is the proof of the transitivity. The existence of an invariant volume form easily follows from the $C^1$ version of  Livsic's Theorem (Theorem 19.2.5 of \cite{K}).

The above Theorem \ref{t3} allows to rewrite a Theorem of Livsic-Sinai (Theorem 4.14 of \cite{Bowen}) removing transitivity as general hypothesis, as follows.

\begin{cor}
Let f be a $C^2-$Anosov diffeomorphism. The following statements
are equivalent:
\begin{enumerate}
\item $f$ admits an invariant measure of the form $d\nu = hdm$ where $h$ is a positive
$C^1$ function. (See  remark on page 72 of \cite{Bowen}).
\item $f$ admits an invariant measure $\nu$ absolutely continuous w.r.t. $m.$
\item $Jf^n(x) = 1$  whenever $f^n(x) = x.$
\end{enumerate}
\end{cor}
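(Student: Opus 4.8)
The plan is to prove the cycle $(3)\Rightarrow(1)\Rightarrow(2)\Rightarrow(3)$. The implication $(1)\Rightarrow(2)$ is immediate: a measure $d\nu=h\,dm$ with $h$ a positive $C^1$ function is in particular absolutely continuous with respect to $m$. For $(3)\Rightarrow(1)$ I would invoke Theorem \ref{t3} directly: statement $(3)$ is exactly its hypothesis, so $f$ is transitive and preserves a $C^1$ volume form $\omega$; writing $\omega=h\,dm$, where $m$ is the Riemannian volume, exhibits a strictly positive $C^1$ density $h$ (a volume form is nowhere vanishing), and the associated measure is $f$-invariant. This is $(1)$; we also keep the transitivity in reserve.

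The substantive direction is $(2)\Rightarrow(3)$. Given an $f$-invariant probability $\nu\ll m$, the first step is to replace it by an \emph{ergodic} one: for a $C^2$ Anosov diffeomorphism the ergodic components of an a.c. invariant measure are again a.c., since the absolute continuity of the stable and unstable foliations allows $\nu$ to be disintegrated into a.c. conditionals. So let $\mu$ be ergodic with $\mu\ll m$. Then $\mu$ has a.c. conditionals along unstable leaves, i.e. it is the SRB measure of $f$, whose support $\Lambda$ is an attracting basic set of the spectral decomposition of $\Omega(f)$; applying the same reasoning to $f^{-1}$, which also preserves $\mu$, shows that $\Lambda$ is simultaneously a repeller. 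A set that is at once an attractor and a repeller of an Anosov diffeomorphism is open --- each nearby point lies on a local stable leaf and on a local unstable leaf that both meet $\Lambda$, hence belongs to $\Lambda$, so $\Lambda$ contains a neighbourhood of each of its points --- and, being also closed and invariant in the connected manifold $M$, it must equal $M$. Thus $\Omega(f)=M$ is a single basic set and $f$ is transitive. With transitivity available, Theorem $4.14$ of \cite{Bowen} --- the Livsic--Sinai theorem --- applies with no extra hypothesis and delivers the remaining equivalences, in particular $(2)\Rightarrow(3)$.

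The main obstacle is precisely this last step, and inside it the passage from ``possesses an a.c. invariant measure'' to ``is transitive'': the transfer identity $h=(h\circ f^{n})\,Jf^{n}$ holds only $m$-almost everywhere, so it cannot be evaluated at a periodic point until one knows the density to be positive (indeed regular) throughout $M$, which is exactly what controlling the support of $\nu$ achieves. The two delicate inputs are the absolute continuity of the invariant foliations and the resulting rigidity (an a.c. invariant measure is SRB for both $f$ and $f^{-1}$, forcing its support to be open); granting these, the cycle closes, and the role of Theorem \ref{t3} is to make $(3)$ a legitimate entry point by producing transitivity --- and in fact $(1)$ --- from the periodic Jacobian condition alone.
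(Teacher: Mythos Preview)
Your proposal is correct and follows essentially the same route as the paper: the paper does not give a separate proof of this corollary but simply observes that items (1) and (2) already imply transitivity, while Theorem~\ref{t3} supplies transitivity from (3), after which Bowen's Theorem~4.14 yields all the equivalences. Your cycle $(3)\Rightarrow(1)\Rightarrow(2)\Rightarrow(3)$ organizes exactly this, and your detailed argument for $(2)\Rightarrow$ transitivity (an a.c.\ invariant measure forces a basic set of positive Lebesgue measure, hence simultaneously an attractor and repeller, hence open, hence all of $M$) is precisely the mechanism used at the end of the paper's proof of Theorem~\ref{t3} and attributed there to Corollary~7.4.7 of \cite{FH}.
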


It is important to note that  Theorem 4.14 of \cite{Bowen} is obtained assuming transitivity as general assumption. In fact the items $(1)$ and  $(2)$ imply transitivity, but is not clear if item $(3)$ implies transitivity directly. Our Theorem \ref{t3} answers positively this point. The main tools used to prove it are the theories of SRB measures in \cite{Bowen} and \cite{LEDRAPPIER}.

\begin{definition} Let $f: M \rightarrow M$ be a $C^1$ diffeomorphim. We say that $x \in M$ is a regular point for $f$ if there are real numbers $\lambda_1(x) > \lambda_2(x) > \ldots > \lambda_l(x)$ and a splitting $T_xM = E_1(x) \oplus \ldots \oplus E_l(x)$ of the tangent space of $M$ at $x,$  such that
$$\displaystyle\lim_{n \rightarrow \pm\infty} \frac{1}{n} \log(|| Df^n(x) \cdot v ||) = \lambda_i(x), \forall v \in E_i\setminus\{0\} \; \mbox{and}\; 1\leq i \leq l.$$
The numbers $\lambda_i(x) $ are called Lyapunov exponents of $f$ at $x.$  We denote by $R(f)$ the set of regular points of $f.$
\end{definition}

\begin{theorem}[Oseledec's Theorem] If $f:M \rightarrow M$ is a $C^1-$diffeomorphism preserving $\mu,$ a Borel probability measure, then $\mu(R(f)) = 1.$
\end{theorem}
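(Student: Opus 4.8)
The plan is to reduce the statement to the multiplicative ergodic theorem for a bounded linear cocycle over $(f,\mu)$ and then run the singular--value argument. First I would fix a measurable orthonormal frame field on $TM$ — which exists by a standard measurable selection argument, or one can embed $M$ in a Euclidean space — so that $Df$ is represented by a measurable map $A\colon M\to GL(d,\R)$, $d=\dime M$; since $f$ is $C^1$ and $M$ is compact, $A$ and $A^{-1}$ are uniformly bounded, so $\log^+\|A^{\pm1}\|$ lies in $L^1(\mu)$. The Lyapunov exponents and the splitting at $x$ are intrinsic to the bundle $TM$, hence unchanged by the choice of frame, so it suffices to produce them for the cocycle $A^{(n)}(x)=A(f^{n-1}x)\cdots A(x)$ and for its inverse.

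Next I would invoke Kingman's subadditive ergodic theorem. For each $1\le k\le d$ the functions $x\mapsto\log\|\Lambda^k A^{(n)}(x)\|$ form a subadditive sequence over $f$, dominated by a bounded (hence integrable) function, so $\tfrac1n\log\|\Lambda^k A^{(n)}(x)\|$ converges $\mu$--a.e. to an $f$--invariant function $\psi_k(x)$. Since $\|\Lambda^k A^{(n)}(x)\|$ equals the product of the $k$ largest singular values of $A^{(n)}(x)$, subtracting consecutive limits shows that $\tfrac1n\log\sigma_k^{(n)}(x)$ converges to $\psi_k(x)-\psi_{k-1}(x)$ (with $\psi_0\equiv0$); these numbers are non-increasing in $k$, and their distinct values, counted with multiplicity, are by definition the Lyapunov exponents $\lambda_1(x)>\cdots>\lambda_{l}(x)$. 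On a full--measure set all these limits exist and the $\lambda_i$ are well defined; by passing to the ergodic decomposition one may even take the $\lambda_i$ and their multiplicities constant, though this is not needed for the a.e. statement.

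The core of the proof is the construction of the invariant splitting. I would introduce the decreasing forward filtration $V^+_i(x)=\{v\in T_xM:\ \limsup_n\tfrac1n\log\|A^{(n)}(x)v\|\le\lambda_i(x)\}$, which is $Df$--equivariant, measurable, and — by a dimension count using the singular--value asymptotics above and a greedy choice of vectors realizing the successive exponents — satisfies $\dime V^+_i(x)-\dime V^+_{i+1}(x)=$ multiplicity of $\lambda_i(x)$. Running the same construction for $f^{-1}$ yields a decreasing backward filtration $V^-_i(x)$, and I would set $E_i(x)=V^+_i(x)\cap V^-_i(x)$, with the indexing arranged so that $V^-_i(x)$ collects the vectors whose backward exponent is at most $-\lambda_i(x)$. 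It then remains to prove (i) that $T_xM=\bigoplus_i E_i(x)$, i.e. the forward and backward flags are in general position, and (ii) that for every $v\in E_i(x)\setminus\{0\}$ the two-sided limit $\lim_{n\to\pm\infty}\tfrac1n\log\|Df^n(x)v\|$ exists and equals $\lambda_i(x)$, which in particular turns the $\limsup$'s above into genuine limits. Both (i) and (ii) reduce to an angle estimate: the angle between the fast and slow singular subspaces of $A^{(n)}(x)$ decays subexponentially. This is where the spectral gaps $\lambda_i>\lambda_{i+1}$ enter, via a further application of Kingman's theorem (to the cocycle induced on a suitable exterior power or Grassmannian) combined with a Borel--Cantelli argument.

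The step I expect to be the main obstacle is precisely (i)--(ii): upgrading the one-sided filtrations to a genuine $Df$--invariant splitting on which the normalized logarithmic growth converges bilaterally. Everything before it is essentially bookkeeping on singular values together with Kingman's theorem, but the transversality of the forward and backward flags, and the accompanying subexponential control of angles, is the delicate heart of Oseledec's (equivalently Raghunathan's) lemma. Once this is in place, since $R(f)$ is by definition the set of points at which this splitting and these limits exist, what we have shown is exactly $\mu(R(f))=1$.
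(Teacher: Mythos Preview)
The paper does not prove Oseledec's Theorem; it merely states it and refers the reader to \cite{Oseledets} and \cite{Ma} for a proof. So there is no ``paper's own proof'' to compare against. Your outline is the standard Raghunathan-style argument (reduction to a bounded $GL(d,\R)$--cocycle, Kingman's subadditive theorem applied to the exterior powers to extract the singular--value asymptotics, construction of forward and backward filtrations, and finally the transversality/angle estimate that upgrades the two filtrations to a genuine invariant splitting with two--sided limits). This is essentially the approach taken in Ma\~n\'e's book \cite{Ma}, one of the references the paper points to, and your identification of step (i)--(ii) as the crux is accurate. As a sketch it is correct; nothing in it conflicts with the paper, since the paper simply delegates the proof.
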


For the proof of Oseledec's Theorem see \cite{Oseledets} or \cite{Ma}.

Given a point $x \in R(f),$ we denote by $\Lambda^u_f(x)$ the sum of all positive Lyapunov exponent of a point $x.$ More precisely
$$\Lambda^u_f(x) = \sum_{i = 1}^l \max\{\lambda_i(x),0\}\cdot \dim(E_i(x)).$$ Analogously we denote by $\Lambda^s_f(x)$ the sum of all negative Lyapunov exponent of $x \in R(f).$

We can use Theorem \ref{t3} to obtain sufficient conditions for transitivity of Anosov diffeormorphisms.

\begin{cor}\label{t31}
Let $f: M \rightarrow M$ be a $C^{2}-$Anosov diffeomorphism. If $\Lambda^u_f$ and $\Lambda^s_f$ are constant on $Per(f),$ then $\Lambda^u_f + \Lambda^s_f $ is null on $Per(f)$ and consequently  $f$ is transitive.
\end{cor}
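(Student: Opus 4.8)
\emph{Proof sketch.} The plan is to invoke Theorem~\ref{t3}: it is enough to prove that $Jf^n(p)=1$ whenever $f^n(p)=p$, and for this it is enough to prove that $\Lambda^u_f(p)+\Lambda^s_f(p)=0$ for every $p\in\Per(f)$. Indeed, if $p$ has period $n$ then the eigenvalues of $Df^n(p)$ have moduli $e^{n\lambda_i(p)}$, all different from $1$ by hyperbolicity, so
\[
\log Jf^n(p)=\sum_{k=0}^{n-1}\log Jf(f^k(p))=n\bigl(\Lambda^u_f(p)+\Lambda^s_f(p)\bigr),
\]
and the vanishing of the right-hand side gives $Jf^n(p)=1$. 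Write $\Lambda^u_f\equiv a$ and $\Lambda^s_f\equiv c$ on $\Per(f)$ (the hypothesis), and put $\phi:=\log Jf$, a continuous function on $M$. The displayed identity says precisely that the average of $\phi$ along every periodic orbit equals $a+c$, so the whole problem is to show $a+c=0$.

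First I would bring in the thermodynamic formalism. By the spectral decomposition $\Omega(f)=\Omega_1\sqcup\cdots\sqcup\Omega_m$, with each $\Omega_i$ a basic set, and there is at least one attractor $\Lambda^+$ for $f$ and at least one attractor $\Lambda^-$ for $f^{-1}$ (a repeller for $f$). Let $\mu^+$ be the SRB measure of $f$ on $\Lambda^+$ and $\mu^-$ the SRB measure of $f^{-1}$ on $\Lambda^-$; these exist by the theory of \cite{Bowen} and \cite{LEDRAPPIER}. On a basic set the periodic orbit measures are weak-$*$ dense in the space of invariant probability measures (see \cite{K}), and $\phi$ is continuous, so approximating $\mu^+$ (resp. $\mu^-$) by periodic orbit measures on $\Lambda^+$ (resp. on $\Lambda^-$, where $f$- and $f^{-1}$-periodic orbits coincide) gives
\[
\int_M\phi\,d\mu^+=a+c=\int_M\phi\,d\mu^-.
\]

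Next I would read off the two inequalities $a+c\le 0$ and $a+c\ge 0$ from the entropy formulas. For the SRB measure $\mu^+$ one has Pesin's formula $h_{\mu^+}(f)=\int\Lambda^u_f\,d\mu^+$, while Ruelle's inequality applied to $f^{-1}$ gives $h_{\mu^+}(f^{-1})\le\int(-\Lambda^s_f)\,d\mu^+$; since $h_{\mu^+}(f^{-1})=h_{\mu^+}(f)$ this yields $\int(\Lambda^u_f+\Lambda^s_f)\,d\mu^+\le 0$, and as $\int\log Jf\,d\mu^+=\int(\Lambda^u_f+\Lambda^s_f)\,d\mu^+$ (Birkhoff together with Oseledec), we get $\int\phi\,d\mu^+\le 0$, hence $a+c\le 0$. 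Applying the same reasoning to the Anosov diffeomorphism $f^{-1}$ with its SRB measure $\mu^-$ gives $\int\log Jf^{-1}\,d\mu^-\le 0$; using $\log Jf^{-1}=-\phi\circ f^{-1}$ and the $f$-invariance of $\mu^-$ this reads $\int\phi\,d\mu^-\ge 0$, hence $a+c\ge 0$. Therefore $a+c=0$, so $Jf^n(p)=1$ for every $p\in\Per(f)$ with $f^n(p)=p$, and Theorem~\ref{t3} gives transitivity.

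The main obstacle is the sign of $\int\log Jf\,d\mu^+$: it relies on Pesin's entropy formula for the SRB measure of an Anosov attractor together with Ruelle's inequality for the inverse, which are exactly the SRB-theoretic inputs of \cite{Bowen} and \cite{LEDRAPPIER}. A secondary point to state carefully is the weak-$*$ density of periodic orbit measures in $\mathcal{M}(f|_{\Lambda^{\pm}})$, which is what converts the periodic-orbit hypothesis on $\Lambda^u_f,\Lambda^s_f$ into information about $\mu^{\pm}$.
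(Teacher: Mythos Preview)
Your argument is correct and shares the same skeleton as the paper's: both reduce to Theorem~\ref{t3} by showing $\Lambda^u_f+\Lambda^s_f=0$ on $\Per(f)$, and both use an SRB measure $\mu^+$ for $f$ together with one $\mu^-$ for $f^{-1}$, combining Pesin's formula with Ruelle's inequality. The packaging differs. The paper first uses the Anosov Closing Lemma to show that $\Lambda^u_f(x)$ equals the periodic constant $\Lambda^u_f$ for $\mu$-a.e.\ $x$ and \emph{every} invariant $\mu$; then Ruelle plus the Variational Principle give $h_{top}(f)\le\Lambda^u_f$, while the SRB measure realizes equality, so $h_{top}(f)=\Lambda^u_f$ and $\mu^+$ is a measure of maximal entropy. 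Doing the same for $f^{-1}$ gives $h_{top}(f^{-1})=-\Lambda^s_f$, and $h_{top}(f)=h_{top}(f^{-1})$ finishes. You instead bypass topological entropy entirely: weak-$*$ density of periodic measures gives $\int\log Jf\,d\mu^{\pm}=a+c$ directly, and then Pesin for $\mu^+$ combined with Ruelle for $f^{-1}$ (via $h_{\mu^+}(f)=h_{\mu^+}(f^{-1})$) yields $a+c\le 0$, with the mirror argument on $\mu^-$ giving $a+c\ge 0$. Your route is a bit more economical (no Variational Principle, no maximal-entropy detour); the paper's route has the side benefit of identifying $\mu^{\pm}$ as measures of maximal entropy. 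The weak-$*$ density you invoke and the paper's Closing Lemma step play the same role and are essentially interchangeable here.
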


\begin{cor}\label{t4}
Let $f: M \rightarrow M$ be a $C^2-$Anosov diffeomorphism. If $R(f) = M, $ then $f$ is transitive.
\end{cor}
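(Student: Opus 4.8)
The plan is to deduce this directly from Corollary \ref{t31}, which in turn rests on Theorem \ref{t3}. The key observation is that if $R(f) = M$, then in particular every periodic point is regular, and at a periodic point the Lyapunov exponents are forced to be the eigenvalue data of the return map. Concretely, I would first take $p \in \Per(f)$ with $f^n(p) = p$; since $p \in R(f)$, the Oseledets splitting at $p$ exists, and because $Df^n(p)\colon T_pM \to T_pM$ is a genuine linear isomorphism whose iterates govern the growth rates, the Lyapunov exponents at $p$ are exactly $\tfrac{1}{n}\log|\rho|$ as $\rho$ ranges over the eigenvalues of $Df^n(p)$ (with multiplicity). Hence $n\bigl(\Lambda^u_f(p) + \Lambda^s_f(p)\bigr) = \sum \log|\rho| = \log|\det Df^n(p)| = \log Jf^n(p)$.

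So the task reduces to showing $Jf^n(p) = 1$ for all such $p$, and then invoking Theorem \ref{t3}. I would argue by contradiction: suppose $Jf^n(p) \neq 1$ for some periodic $p$ of period $n$, and WLOG (replacing $f$ by $f^{-1}$ if needed) $Jf^n(p) > 1$, i.e. $\Lambda^u_f(p) + \Lambda^s_f(p) = \tfrac1n\log Jf^n(p) > 0$. The strategy is now to exploit that $R(f) = M$ is an extremely rigid hypothesis — it says \emph{every} orbit has well-defined Lyapunov exponents, both forward and backward. I want to find a point $x$ whose forward and backward exponent-sums disagree, contradicting regularity, or alternatively find a point where the sum $\Lambda^u + \Lambda^s$ takes a different value, then use this to violate a constraint. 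The cleanest route: since $\Lambda^u_f + \Lambda^s_f$ at a regular point $x$ equals $\lim_{n\to+\infty}\tfrac1n\log Jf^n(x)$, and also (using the backward limit in the definition of regularity) equals $\lim_{n\to-\infty}\tfrac1n\log Jf^n(x) = -\lim_{m\to+\infty}\tfrac1m\log Jf^{-m}(x)$, the function $x \mapsto \Lambda^u_f(x)+\Lambda^s_f(x)$ is the Birkhoff-type limit of the continuous cocycle $\log Jf$. If this limit is nonzero at a periodic point, I would like to conclude it is nonzero (of the same sign) on a neighborhood, hence build an invariant measure — e.g. a weak-$*$ limit of empirical measures $\frac1N\sum_{k=0}^{N-1}\delta_{f^k(x)}$ along an arbitrary $x$ — against which $\int \log Jf\, d\mu$ is controlled; but $\int \log Jf\,d\mu = 0$ for \emph{every} $f$-invariant probability $\mu$ is false in general, so I must instead play the forward and backward averages against each other. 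The real engine is: regularity everywhere forces $\int \log Jf\, d\mu = 0$ for every invariant $\mu$ supported on a single orbit closure in a symmetric way — more precisely, since for $x\in R(f)$ the forward and backward limits of $\tfrac1n\log Jf^n(x)$ coincide (both equal $\Lambda^u+\Lambda^s$), but the forward limit is $\int\log Jf\,d\mu^+$ and the backward is $\int \log Jf\, d\mu^-$ for the forward/backward empirical limits, and at a periodic point $\mu^+ = \mu^-$ anyway — so this symmetry alone is not the contradiction.

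I therefore expect the \textbf{main obstacle} to be precisely this step: converting "$Jf^n(p)\neq 1$ at one periodic point" into a genuine contradiction with "$R(f) = M$". The honest and most likely intended argument is to \emph{avoid} it by noting that Corollary \ref{t31} already does the heavy lifting: if $R(f) = M$ then $\Lambda^u_f$ and $\Lambda^s_f$, being defined everywhere, are in particular defined on $\Per(f)$, but one still needs them \emph{constant} on $\Per(f)$, which is \emph{not} automatic from $R(f)=M$. So the actual content must come from elsewhere — presumably from the SRB-measure machinery of \cite{Bowen} and \cite{LEDRAPPIER} cited for Theorem \ref{t3}: under $R(f) = M$ one shows the (unique) SRB measure and its backward analogue both have zero "free energy" defect, forcing $\sum_{\text{periodic}}$ Jacobian conditions via the Livsic-type closing. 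Accordingly, my plan is: (i) reduce to proving $Jf^n(p) = 1$ for all periodic $p$; (ii) show that $R(f) = M$ forces the entropy-formula defect $h_\mu(f) - \int \Lambda^u_f\, d\mu$ to vanish for the relevant measures, so the forward SRB measure $\mu^+$ is absolutely continuous, and symmetrically the backward SRB measure $\mu^-$ (for $f^{-1}$) is absolutely continuous; (iii) conclude by item (2)$\Rightarrow$(3) of the Corollary above that $Jf^n(p) = 1$; (iv) apply Theorem \ref{t3}. The delicate point is (ii): establishing the entropy formula / Pesin-type equality from pointwise regularity everywhere, for which I would lean on Ledrappier's characterization in \cite{LEDRAPPIER} that equality in Ruelle's inequality holds iff the measure has absolutely continuous conditionals, combined with the fact that when \emph{every} point is regular the subadditive ergodic averages converge uniformly, pinning down $\int \Lambda^u_f\,d\mu$ simultaneously for all invariant $\mu$.
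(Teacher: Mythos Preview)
Your proposal has a genuine gap at exactly the point you yourself flag: you write that constancy of $\Lambda^u_f$ and $\Lambda^s_f$ on $\Per(f)$ ``is \emph{not} automatic from $R(f)=M$'' and then abandon the route through Corollary~\ref{t31} in favor of an SRB argument. But the paper's proof does go through Corollary~\ref{t31}, and the missing idea is a short Hopf-type argument: for any $x_0$ and any nearby $z$, local product structure gives $z' \in W^u_f(z)\cap W^s_f(x_0)$; since $z'\in W^s_f(x_0)$ the \emph{forward} limits $\tfrac1n\log J^uf^n$ agree at $z'$ and $x_0$, and since $z'\in W^u_f(z)$ the \emph{backward} limits agree at $z'$ and $z$. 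Regularity at $z'$ (which you have because $R(f)=M$) says the forward and backward limits coincide there, so $\Lambda^u_f(x_0)=\Lambda^u_f(z')=\Lambda^u_f(z)$. Thus $\Lambda^u_f$ is locally constant, hence constant by connectedness; likewise $\Lambda^s_f$. Now Corollary~\ref{t31} applies directly.

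Your alternative plan (i)--(iv) does not close the gap. Step (ii) is the crux and is not justified: to conclude that the forward SRB measure $\mu^+$ is absolutely continuous via Ledrappier you need $h_{\mu^+}(f) = -\int \Lambda^s_f\,d\mu^+$, equivalently $\int \log Jf\,d\mu^+ = 0$. You assert that ``when every point is regular the subadditive ergodic averages converge uniformly,'' but this is neither proved nor obviously true, and in any case knowing $\int \Lambda^u_f\,d\mu$ for all $\mu$ does not by itself give the entropy formula in the stable direction. In fact $\int \log Jf\,d\mu^+ = 0$ is (via density of periodic measures) essentially equivalent to $Jf^n(p)=1$ for all periodic $p$, so your step (ii) is tacitly assuming what step (i) set out to prove. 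The Hopf argument above is both simpler and avoids this circularity.
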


Based on \cite{Y} we obtain.

\begin{cor}\label{t5}
 Let $M$ be a compact Riemannian manifold with negative sectional curvature. Then there are no $C^2-$Anosov diffeomorphisms $f: M \rightarrow M,$ such that $R(f) = M.$
\end{cor}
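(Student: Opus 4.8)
The plan is to argue by contradiction: suppose $f\colon M\to M$ is a $C^{2}$ Anosov diffeomorphism of a compact manifold $M$ of strictly negative sectional curvature with $R(f)=M$, and derive a contradiction from the rigidity this forces together with \cite{Y}. The first move is to run the hypothesis $R(f)=M$ through the preceding results. If $p\in\Per(f)$ with $f^{n}(p)=p$, then $p$ is regular and Oseledec's theorem gives $\tfrac1n\log Jf^{n}(p)=\Lambda^{u}_{f}(p)+\Lambda^{s}_{f}(p)$; comparing Ruelle's inequality for $f$ and for $f^{-1}$ against the Sinai--Ruelle--Bowen measures of \cite{Bowen,LEDRAPPIER} forces $Jf^{n}(p)=1$ for every $p\in\Per(f)$ --- this is the mechanism behind Corollary \ref{t4}. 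Theorem \ref{t3} then applies, so $f$ is transitive and leaves invariant a $C^{1}$ volume form, that is, a probability measure $\nu$ with positive $C^{1}$ density with respect to $m$.

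Thus $M$ would carry a $C^{2}$, transitive, volume-preserving Anosov diffeomorphism, and the contradiction must come from negative curvature: here I invoke \cite{Y}, to the effect that a compact manifold of strictly negative sectional curvature admits no such diffeomorphism. The tension is topological. On one side, Cartan--Hadamard makes $M$ aspherical, and since compactness bounds the sectional curvature above by a negative constant the balls in the universal cover grow exponentially, so $\pi_{1}(M)$ has exponential growth by Milnor--\v{S}varc and $M$ is not finitely covered by a nilmanifold. On the other side, a transitive volume-preserving Anosov diffeomorphism severely constrains the topology of $M$ --- for instance, the invariant volume together with the stable and unstable foliations yields nonzero $f^{*}$-invariant Ruelle--Sullivan classes in complementary degrees --- and, together with Manning's results in the infranilmanifold case \cite{Mn}, this cannot coexist with an exponentially growing fundamental group. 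Assembling the two statements yields the contradiction and proves Corollary \ref{t5}.

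I expect the genuinely hard point to be precisely this last step: pinning down the exact statement of \cite{Y} and checking that the diffeomorphism supplied by Theorem \ref{t3} satisfies its hypotheses --- in particular, that a $C^{1}$ invariant volume form rather than a $C^{\infty}$ one is enough, perhaps after a standard regularity bootstrap. Everything preceding it is routine bookkeeping with Oseledec's theorem, Ruelle's inequality, the SRB theory of \cite{Bowen,LEDRAPPIER}, and Corollary \ref{t4}.
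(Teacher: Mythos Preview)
Your approach is essentially the paper's: apply Corollary~\ref{t4} to get transitivity from $R(f)=M$, then invoke \cite{Y} for the contradiction. The main simplification you are missing is that Yano's theorem in \cite{Y} rules out \emph{transitive} Anosov diffeomorphisms on negatively curved manifolds with no volume-preservation hypothesis at all, so the invariant $C^{1}$ volume form from Theorem~\ref{t3} plays no role and your worry about its regularity is moot; once Corollary~\ref{t4} gives transitivity, \cite{Y} applies directly.
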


In fact, from \cite{Y}, there are no transitive Anosov diffeomorphisms defined on a compact Riemannian manifold with negative sectional curvature.

\section{Proof of Theorem \ref{t3}}

An important fact of Anosov diffeomorphisms is that such diffeomorphisms are axiom $A.$ Axiom $A$ diffeomorphisms satisfy many rich ergodic and topological properties, see \cite{Bowen}. We have a special interest in SRB measures.

\begin{definition} For a given $C^2-$axiom $A$ diffeomorphism $f: M \rightarrow M,$ a $SRB$ measure for $f$ is an $f-$invariant probability measure $\mu,$ such that  $$h_{\mu}(f) = \displaystyle\int_M \log(J^uf(x)) d\mu,$$ where $J^uf(x) = |\det(Df(x): E^u_f(x) \rightarrow E^u_f(f(x)))|$ and $h_{\mu}(f)$ denotes the metric entropy of $f$ with respect to $\mu.$
\end{definition}

For more definitions and equivalences, see for instance \cite{LS}.

Let $\Omega_s$ be a  basic set of a $C^2-$Axiom A diffeomorphism $f: M \rightarrow M.$ By Theorem 4.11 of \cite{Bowen},  if $m(W^s(\Omega_s)) > 0,$ then there is a measure $\nu$ such that $\nu(\Omega_s) = 1$ and $h_{\nu}(f) = \displaystyle\int_M \log(J^uf(x)) d\nu.$  In other words there is an SRB measure, such that $\nu(\Omega_s) = 1.$

In our case, since $f$ is Anosov, in particular it is axiom A, then
$$ \Omega(f) = \bigcup_{j =1}^n \Omega_j\; \mbox{and} \; M = \displaystyle\bigcup_{j =1}^n W^s(\Omega_j),$$
as in \cite{Bowen}. So there is a basic set $\Omega_s$ such that $m(W^s(\Omega_s)) > 0.$ Let $\nu$ be an SRB measure of $f,$ so

\begin{equation}
 h_{\nu}(f)  = \displaystyle\int_M \log(J^uf(x)) d\nu  =\displaystyle\int_{M}\Lambda^u_f(x)d\nu(x).\label{SRBu}
\end{equation}

Let $\mu$ be an arbitrary $f-$invariant probability measure. Denoting by $R$ the set of all simultaneously regular and recurrent points of $f,$ we get $\mu(R)  = 1.$ By applying Anosov Closing Lemma (Theorem 6.4.15 of \cite{K}),  for any $x \in R$  \begin{equation} \label{shadow} \lim_{n \rightarrow +\infty} \frac{1}{n} \log(Jf^n(x)) = 0, \end{equation}
so $\Lambda^s_f(x) + \Lambda^u_f(x) = 0.$ In fact, consider $\varepsilon > 0$ arbitrary and $x \in R.$ Since $f$ is  $C^1$ take $\delta > 0$ such that $1 - \varepsilon < \frac{Jf(x)}{Jf(y)} < 1 + \varepsilon,$ if $d(x,y) < \delta.$ Using that $x \in R,$ there is a sequence of positive integers $n_k, k =1,2,\ldots$ such that $d(x, f^{n_k}(x)) < \delta',$ for some $\delta' > 0,$  such that every $\delta'-$pseudo orbit is indeed $\delta$ shadowed by a periodic orbit of a point $p_k,$ such that $f^{n_k}(p_k) = p_k$ and $d(f^i(x), f^i(p_k)) < \delta, i = 0, 1, \ldots, n_k-1.$ Since $Jf^{n_k}(p_k) = 1,$
$$Jf^{n_k}(x) = \frac{Jf^{n_k}(x) }{Jf^{n_k}(p_k)} = \frac{\prod_{j=0}^{n_k -1} Jf (f^j(x))}{\prod_{j=0}^{n_k -1} Jf (f^j(p_k))}  \Rightarrow  (1 - \varepsilon)^{n_k} <Jf^{n_k}(x) < (1+ \varepsilon)^{n_k}.$$
Taking $k \rightarrow +\infty, $ we get $ \log(1 - \varepsilon) \leq  \Lambda^u_f(x) + \Lambda^s_f(x)  = \displaystyle\lim_{k \rightarrow +\infty}\frac{1}{n_k} \log(Jf^{n_k} (x)) \leq \log(1 + \varepsilon).$ Finally taking $\varepsilon $ going to zero, we obtain $\Lambda^u_f(x) + \Lambda^s_f(x) = 0,$ for any $x \in R.$

In particular, since $\nu(R) = 1$ we get
\begin{equation}
 h_{\nu}(f) = \displaystyle\int_{M}\Lambda^u_f(x)d\nu(x)= -\displaystyle\int_{M} \Lambda^s_f(x)d\nu(x).\label{SRBs}
\end{equation}

From expressions $(\ref{SRBu}) $  and  $(\ref{SRBs}), $ and using the SRB theory developed in \cite{LEDRAPPIER}, the above expressions mean that $\nu$ has absolutely continuous density along the stable and unstable foliations. Since $f$ is $C^2,$ the measure $\nu$ is absolutely continuous, see  \cite{LEDRAPPIER} as a reference of this fact. As $\nu$ is absolutely continuous, there is a basic set $\Omega_s \subset \Omega(f),$ such that $\nu(\Omega_s) > 0,$ consequently $m(\Omega_s) > 0.$ By Corollary 7.4.7 of \cite{FH}, the set $\Omega_s$ is open, by connectedness $M = \Omega(f).$

For completeness of the proof, we  reproduce the argument of Corollary 7.4.7 of \cite{FH}.

Since  $m(\Omega_s) > 0,$ the basic set $\Omega_s$ is an attractor and then $W^u(\Omega_s) = \Omega_s $ (see Theorem 5.3.27 of \cite{FH}). Consequently $m(W^u(\Omega_s) ) > 0$ and it implies that $f$ is a repeller, so $W^s(\Omega_s) = \Omega_s.$  Finally $W^u(\Omega_s) = \Omega_s = W^s(\Omega_s)$ and then $\Omega_s$ is open. By connectedness of $M,$ we get $M = \Omega_s$ and thus $f$ is transitive.

Since $f$ is a $C^2-$Anosov diffemorphism, $x \in M \mapsto \log(Jf(x))$ is $C^1.$ Now using the $C^1$ version Livsic's Theorem (Theorem 19.2.5 of \cite{K}) there is a $C^1-$function $\phi:M \rightarrow \mathbb{R},$ such that
\begin{equation}\label{density}
\log(J f(x))  =  \phi( f(x))- \phi(x) \Leftrightarrow Jf(x) e^{-\phi(f(x))} = e^{-\phi(x)}.
\end{equation}

Here we observe that if $f$ is $C^{\infty},$ then by \cite{LlaveMM86}, the solution $\phi$ is also $C^{\infty}.$

If we consider the absolutely continuous measure $d\mu_{\phi} = e^{-\phi(x)}dm$, for any Borel set $S,$
$$ \mu_{\phi}(f(S)) =  \displaystyle\int_{f(S)} e^{-\phi(x)}dm = \displaystyle\int_{S} e^{-\phi(f(x))} Jf(x) dm = \displaystyle\int_{S} e^{-\phi(y)} dm = \mu_{\phi}(S), $$
so $\mu_{\phi}$ is $f-$invariant.
Since $e^{\phi(x)} \in [\frac{1}{c}, c]$ for any $x \in M$ and for some $c > 0,$ the measure $\mu_{\phi}$ is finite. Up to normalize $\mu_{\phi}$, we can assume $\mu_{\phi}$ is a probability measure.  Since $f$ is a transitive $C^2-$Anosov diffeomorphism, then by Corollary 4.13 of \cite{Bowen}, there is a unique absolutely continuous invariant probability measure for $f,$ so $\nu = \mu_{\phi}.$

\section{Some Applications}

We present the proofs of Corollaries \ref{t31} and  \ref{t4}.

 \begin{proof}[Proof of Corollary \ref{t31} ] Denote by $\Lambda^u_f$ and $\Lambda^s_f$  the constant values of  $\Lambda^u_f(p) $ and $\Lambda^s_f(p) $ respectively,  for any $p \in Per(f).$
Consider $\mu$  an arbitrary $f-$invariant probability measure and denoting by $R$ the set of all simultaneously regular and recurrent points of $f,$ we get $\mu(R)  = 1.$ We can use  Anosov Closing Lemma to get
\begin{equation}
\Lambda^u_f(x):= \lim_{n \rightarrow + \infty} \frac{1}{n} \log(J^uf^n(x)) = \Lambda^u_f, \label{sameu}
\end{equation}
for any $x \in R.$ The equation $(\ref{sameu})$ is obtained following an analogous argument to get $(\ref{shadow}),$ using the fact $J^uf^{n_k}(p_k) = (\Lambda^u_f)^{n_k}. $

By Ruelle's inequality we obtain $h_{\mu}(f) \leq \Lambda^u_f,$ and using the Variational Principle, the topological entropy of $f,$ denoted by $h_{top}(f),$ satisfies
\begin{equation}\label{topleq}
h_{top}(f) \leq \Lambda^u_f.
\end{equation}

As at the beginning of the proof of Theorem \ref{t3}, there is a measure $\mu^{+}$ an SRB measure of $f.$  So by definition of SRB measure and equation $(\ref{sameu})$, we get
\begin{equation}\label{maxu}
h_{\mu^+}(f) =  \Lambda^u_f,
\end{equation}
so we conclude that $\mu^{+}$ is a  measure of maximal entropy for $f.$

Arguing with $f^{-1},$ we get analogously $h_{top}(f^{-1}) \leq -\Lambda^s_f.$ Taking $\mu^{-}$ an SRB measure of $f^{-1},$  we obtain
\begin{equation}\label{maxs}
h_{\mu^-}(f^{-1}) =  -\Lambda^s_f,
\end{equation}
so we conclude that $\mu^{-}$ is a  measure of maximal entropy for $f^{-1}.$ Since $h_{top}(f) = h_{top}(f^{-1}),$ by  $(\ref{maxu})$ and $(\ref{maxs})$ we get $\Lambda^u_f(p) = -\Lambda^s_f(p),$ for any $p\in Per(f).$ Consequently $Jf^n(p) = 1,$ for any periodic point $p$ such that $f^n(p)=p,$ with $n \geq 1.$ The result follows direct from Theorem \ref{t3}.

\end{proof}

 \begin{proof}[Proof of Corollary \ref{t4} ] The argument here is similar to the Hopf argument.

Let $x_0$ be an arbitrary point on $M,$  since $f$ has local product structure, there is an open neighborhood $V$ of $x_0,$ such that,  given $z \in V,$ there is a point $z'\in V \cap W^u_f(z)\cap W^s_f(x_0).$ Using that every point is regular we obtain
$$ \Lambda^u_f(x_0) = \displaystyle\lim_{n \rightarrow +\infty}\frac{1}{n} \log(J^uf^n(x_0)) =
\displaystyle\lim_{n \rightarrow +\infty}\frac{1}{n} \log(J^uf^n(z')) = \displaystyle\lim_{n \rightarrow -\infty}\frac{1}{n} \log(J^uf^n(z)) =  \Lambda^u_f(z), $$
for any given $z \in V.$

The map $x \mapsto \Lambda^u_f(x)$ is locally constant and $M$ is connect, it implies  $\Lambda^u_f(x) = \Lambda^u_f,$ for any $x \in M.$ Analogously $\Lambda^s_f(x) = \Lambda^s_f,$ for any $x \in M.$  Particularly the functions $\Lambda^u_f$ and $\Lambda^s_f$ are constant on $Per(f).$ We conclude the proof applying Corollary \ref{t31}.
\end{proof}

The existence of Lyapunov exponents everywhere is a rigidity hypothesis,  which was studied in \cite{LM20}.


\begin{thebibliography}{RRRRRR}



\bibitem{Bowen}
R. Bowen,
\newblock Equilibrium States
and the Ergodic Theory
of Anosov Diffeomorphisms.
\newblock {\em  Lecture Notes in Mathematics} 470, Second Revised Edition, Springer 2008.



\bibitem{FH}
T. Fisher, B. Hasselblatt,
\newblock Hyperbolic Flows.
\newblock {\em Zurich Lectures in Advanced Mathematics,} 2019.





\bibitem{K}
A. Katok, B. Hasselblatt,
\newblock Introduction to the Modern Theory of Dynamical Systems.
\newblock {\em Encyclopedia of Mathematics and its applications,} volume 54.

\bibitem{LEDRAPPIER}
F. Ledrappier,
\newblock Propri\`{e}t\`{e}s ergodiques des mesures de Sina\"{i}.
\newblock {\em Publ. Math. I.H.E.S,} 59: 163--188, 1984.



\bibitem{LS}
L-S. Young,
\newblock What Are SRB Measures, and
Which Dynamical Systems Have Them?
{\em Journal of Statistical Physics,} 108(5/6): 733--754, 2002.


\bibitem{LlaveMM86}
R. de la Llave, J.M. Marco, R. Moriy\'{o}n,
\newblock Canonical Perturbation Theory of Anosov Systems and Regularity Results for the Livsic Cohomology Equation.
{\em Annals of Mathematics,} 123(3): 537--611, 1986


\bibitem{LM20}
R. de la Llave, F. Micena,
\newblock Lyapunov exponents everywhere and rigidity.
{\em Journal of Dynamical and Control Systems }, 27:  819--831, 2021.

\bibitem{Ma}
R. Ma\~{n}\'{e},
\newblock Teoria Erg\'{o}dica.
{\em Cole\c{c}\~{a}o Projeto Euclides }, 389 p.p, 1983.


\bibitem{Mn}
A. Manning,
\newblock There are no new Anosov diffeomorphisms on
tori.
{\em American Journal of Mathematics }, 96(3): 422--429, 1974.



\bibitem{Oseledets}
V. Oseledets,
\newblock Multiplicative Ergodic Theorem. Lyapunov Characteristic Numbers for Dynamical Systems.
\newblock {\em Trans. Moscow Math. Soc.,} Vol. 19: 197--221, 1968.



\bibitem{Y}
K. Yano,
\newblock There are no Transitive Anosov Diffeomorphisms
on Negatively Curved Manifolds.
{\em Proc. Japan Acad.,} 59: 445, 1983.
\end{thebibliography}
\end{document}